\newcommand*\circled[1]{\tikz[baseline=(char.base)]{\node[shape=circle,draw,inner sep=2pt] (char) {#1};}}
\def\n{\noindent}
\newtheorem{theorem}{Theorem}
\newtheorem{thm}[theorem]{Theorem}
\newtheorem{proposition}[theorem]{Proposition}
\def\eproof{\hfill$\Box$}
\begin{document}

\title{\huge\bf \textrm{A variation on bisecting the binomial coefficients} }

\author{\Large Eugen J. Iona\c scu$^1$\\
\vspace{0.1cm} \\
\small $^1$Department of Mathematics, \\
\small Columbus State University\\
\small Columbus, GA 31907\\
}

\date{\today}
\maketitle
\thispagestyle{empty}

\begin{abstract}
In this paper, we  present an algorithm
which allows us to search for all the  bisections for the binomial coefficients $\{\binom{n}{k} \}_{k=0,...,n}$ and include a table with the results for all $n\le 154$.
Connections with previous work on this topic is included. We conjecture that the probability of having only trivial solutions is $5/6$.
\end{abstract}
{\bf Keywords:} Binomial coefficients, algorithm, congruence.

\section{Introduction}

This investigation is about the ubiquitous entries of the Pascal Triangle:

\begin{center}
\begin{table} [ht]
\caption{ Binomial Coefficients}{}
\vskip.2cm
\centering
\tiny $$\begin{tabular}{rccccccccccccccccccc}
$n=0$:&&&&&  &    &    &    &    &  1\\\noalign{\smallskip\smallskip}
$n=1$:&&&&&  &    &    &    &  1 &    &  1\\\noalign{\smallskip\smallskip}
$n=2$:&&&&&  &    &    &  1 &    &  2 &    &  1\\\noalign{\smallskip\smallskip}
$n=3$:&&&&&  &    &  1 &    &  3 &    &  3 &    &  1\\\noalign{\smallskip\smallskip}
$n=4$:&&&&&  &  1 &    &  4 &    &  6 &    &  4 &    &  1\\\noalign{\smallskip\smallskip}
$n=5$:&&&&&1 &   &  5  &    & 10 &    & 10 &    &  5 &  & 1\\\noalign{\smallskip\smallskip}
$n=6$:&&&&1  &   &  6  &    & 15 &    & 20 &    &  15 &  & 6 &  & 1\\\noalign{\smallskip\smallskip}
$n=7$:&&&1&  &  7 &    &  21  &  & 35   &  & 35   &   &21  &  &  7& & 1\\\noalign{\smallskip\smallskip}
$n=8$:&&1&& \boxed{-8}  &   & \boxed{-28}   &    & \circled{$\pm 56$} &   & 70 &   &  \circled{$\mp 56$} &   & \boxed{-28} &  & \boxed{-8} & & 1\\\noalign{\smallskip\smallskip}
$n=9$:&1&&9&  &  36 &   & 84   &  & 126  &  &  126 &   & 84  &  &  36&  & 9 &  & 1 \\\noalign{\smallskip\smallskip}
\end{tabular}$$
\label{table:binc}
\end{table}
 \end{center}

Our problem originates  in the early 1990's in a series of papers (\cite{GR97}, \cite{Jef91}, \cite{Mi90}, and \cite{NS}).  First,  Nisan and Szegedy (in \cite{NS}) began looking into polynomial functions (of one or more variables) which represent Boolean functions and their interest was to characterize the degree of such a polynomial (uniquely determined under certain conditions), and describe when is the degree the smallest possible (over the class of Boolean functions).  Around the same time, J. von zur Gathen and J. Rouche  (see \cite{GR97}), learned from Professor Mario Szegedy about the problem and concentrated on symmetric Boolean functions. Although the papers appeared at some distance in time, the two pairs of authors were aware of each other results years in advance. Our work is closely related with this last paper and we will point out the overlap and the new information.

The type of non-constant symmetric Boolean functions, which can be represented by a polynomial of only one variable, defined on $\{0,1,...,n\}$,  with the degree less than the expected one, namely $n$ (using Lagrange interpolation),  became of special interest for obtaining various cryptographic properties (see  Gopalakrishnan et al.~\cite{GHS93},  Cusick and Li~\cite{CL05}, Mitchell ~\cite{Mi90}, Sarkar and Maitra~\cite{Sarkar2007} and more recent works such as Castro, Gonzalez and Medina~\cite{CastroGonzalezMedina}).  A symmetric Boolean function with this special property is now referred to as  {\it balanced}. As is turns out, the existence of these functions is equivalent to what we refer here by the {\it binomial coefficients bisection problem} described below. The values of $n$, and the number of non-constant symmetric Boolean functions in $n$ variables,  with these special properties became of interest. In \cite{IonandCo}, the authors prove various bounds for the number of functions and show the connection of the problem with  the sequence A200147 which is basically the starting point of Nisan and Szegedy (in \cite{NS}) without the language of polynomials.

This problem, that we already alluded to, or simply (BCBP), is finding solutions \break $[\delta_0,\delta_1,\ldots,\delta_{n-1} ,\delta_n]\in\{-1,1\}^{n+1}$ of the equation

\begin{equation}\label{BCBP} \displaystyle \sum_{k=0}^n \delta_k\binom{n}{k}=0.\end{equation}

  The number of all  solutions of the (BCBP) is denoted by $J_n$. The sequence $\{J_n\}$ was introduced and studied in \cite{IonandCo}. It was shown that  $\{J_n\}$ is the same as the number of $0$'s or $1$'s arrays, of $n+1$ elements, with zero $n$-difference, which is recorded as the sequence A200147 in the The On-Line Encyclopedia of Integer Sequences. This identification is clearly the approach from \cite{GR97}, and it makes the objects studied here interesting from an analysis point of view.

The binomial theorem  gives $\sum_{k=0}^n (-1)^n\binom{n}{k}=(1-1)^n=0$, which shows  that \break $\pm [1,-1,1,-1,\ldots ]$ is always a solution of (BCBP), i.e., we have at least two solutions for every~$n$~($J_n\ge 2$).
We also observe  that if  $n$ is odd then $$[\delta_0,\ldots,\delta_{(n-1)/2},-\delta_{(n-1)/2},\ldots,-\delta_0]$$ with $\delta_i\in \{-1,1\}$ arbitrarily chosen, give $2^{(n+1)/2}$ solutions. All these are considered {\em trivial} solutions of (BCBP) (see \cite{CL05}). Our concept of a {\it non-trivial solution} is going to be a little different, in the way we will represent them,
and we will connect the old ideas with this one in the next section. Cusick and Li (\cite{CL05}) raised some questions about the set of values $n$ so that only the trivial solutions 
 of (BCBP) exist. Theorem 2.6 from \cite{GR97},  was rediscovered in  \cite{IonandCo}, and this provided a positive answer to questions  Q2 in Q4 (\cite{CL05}, page 86). Question Q1 is still open, and we want to include it explicitly:
 
 {\bf Q1:} \texttt{Are there infinitely many odd values of $n$ for which only the trivial \break solutions
 of (BCBP) exist?}
 
We believe the answer to this question is definitely Yes, but this situation is way harder than the even case. There are nontrivial solutions for the  (BCBP)  but they do not seem to be that many.
In fact, we conjecture that, except for the case $n\equiv 2$ (mod 6), the probability for the existence of non-trivial solutions is zero.
For $n=6k+2$, $k\in \mathbb N$, we always have non-trivial solutions and the first one  (for $n=8$) is $\Delta:=[1, -1, -1, 1, 1, -1, -1, -1, 1]$. In \cite{GR97}, the authors are interested the Lagrange polynomial representing the data above but just slightly altered, by changing the signs of every other value in the list $\Delta$, i.e., $\widetilde{\Delta}:=[1, 1, -1, -1, 1, 1, -1, 1, 1]$ on the domain $\{0,1,2,\cdots,8\}$ (see Figure~1):

$$\underset{Figure\ 1,\  \text{Lagrange polynomial $P$ interpolating}  \widetilde{\Delta} } {\epsfig{file=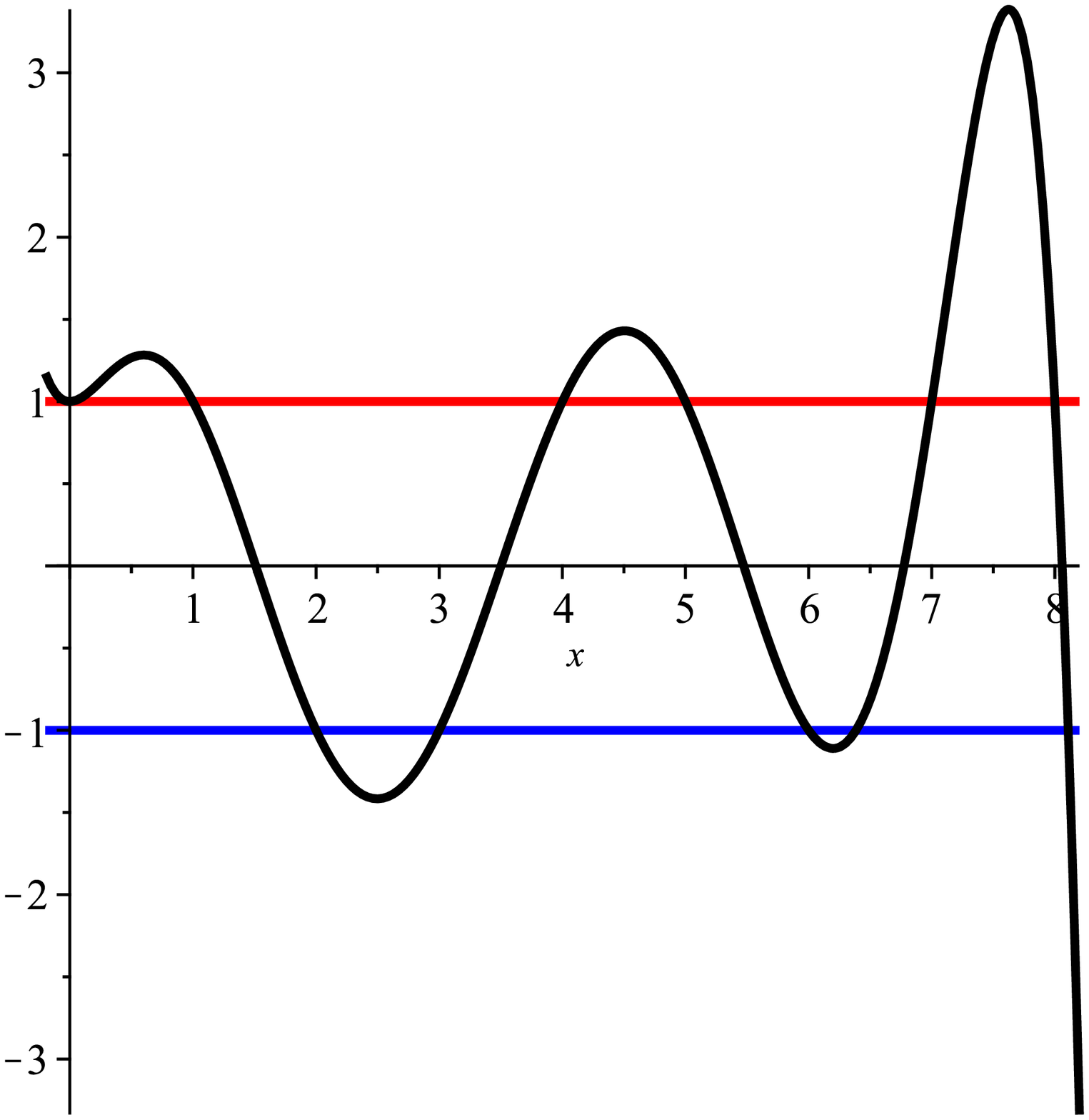,height=2in,width=3in} }$$
It turns out that this polynomial has degree $7$ (not $8$ as expected):

$$P(x)=1+\frac{28}{9}x^2-\frac{481}{90}x^3+\frac{203}{72}x^4-\frac{47}{72}x^5+\frac{5}{72}x^6
-\frac{1}{360}x^7.$$
In fact, the following result is discussed in \cite{GR97} (Theorem 2.2).

\vspace{0.1in}
\n {\bf Theorem:}  \texttt{The Lagrange polynomial interpolating the data $[\delta_0,-\delta_1, \ldots, (-1)^n \delta_n]$ on $\{0,1,...,n\}$ has degree less than $n$, if and only if
$[\delta_0,\delta_1, \ldots,  \delta_n]$ is a solution of (BCBP).}
\vspace{0.1in}

Moreover, they show that the degree of such a polynomial is less or equal than $n-r$, $r\ge 1$, if and only if the truncated data $[\delta_0,\delta_1, \ldots, \delta_{n-m+1}]$ is a solution of
(BCBP) for every $m=1,2,...,r$. They denote the maximum of all $r$ values, over all possible non-constant data, by $\Gamma(n)$ and called it {\it the gap}.  Certainly, non-trivial solutions of (BCBP) are not
that likely and so, it is  even less likely of instances when $\Gamma(n)>1$. It is actually conjectured (also in \cite{GR97}) that $\Gamma(n)\le 3$ for all $n\in \mathbb N$.
In a relatively recent paper (\cite{CohenShpilkaAvishay}), it is shown that $\Gamma(n)\le \sqrt{n}$ if $n=p^2-1$ with $p$ a prime number. 

In this paper we are basically only interested in the problem $\Gamma(n)>0$. Our Table~\ref{table:nontrivbisections} and partially the Table~\ref{table:nontrivbisections2} are extending the data from $n=128$ to $n=154$. However, it does not bring any new evidence in the way of non-trivial solutions but supporting  the conjecture that the trivial solutions are predominant.   We include a similar result to Theorem 2.6 (\cite{GR97}) in the case of odd $n$, with the property that $n$ and $(n+1)/2$ are primes. In the last section, we describe our algorithm that we use, in order  to solve the (BCBP)  for all $n\le 154$. Our algorithm is implemented in Maple and we run it on a usual laptop.

\section{Connection with the previous ``non-trivial" solution concept}

We would like to eliminate other solutions and replace the sequence $J_n$ with the sequence $\tilde J_n$, which is defined by:

\begin{quote}
    \noindent {\bf if $n$ even}, the number of all choices $\epsilon_{i}\in \{-1,0,1\}$, so that
\begin{equation}\label{neq1}
\sum_{i=0}^{n/2-1}\epsilon_{i}\binom{n}{i} =(-1)^{n/2+1} \frac{1}{2} \binom{n}{n/2},
\end{equation}
\noindent and $[\epsilon_1,\epsilon_2,\epsilon_3,...]$ is different of the trivial solution $s_0:=\underset{\frac{n}{2}}{[\underbrace{1,-1,1,-1,...}]}$.
\end{quote}

\begin{quote}
\noindent {\bf If $n$ is odd}, $\tilde J_n$ is the number of all choices $\epsilon_{i}\in \{-1,0,1\}$, so that
\begin{equation}\label{neq2}
\sum_{i=0}^{(n-1)/2}\epsilon_{i}\binom{n}{i} =0,
\end{equation}
\noindent where not all $\epsilon_{i}$, are equal to $0$; in this case we define $s_0:=\underset{\frac{n+1}{2}}{[\underbrace{0,0,0,...,0,0}]}$ the trivial solution
of (\ref{neq2}).
\end{quote}

It is clear that (\ref{neq2}) is invariant to the change $\epsilon_i\to -\epsilon_i$, which implies that
$\tilde J_{2n+1}$ is always even. To eliminate even this duplication we will work with

$$\hat J_n:=\begin{cases} \tilde J_n \ \ \text{if n is even} \\ \\
\tilde J_n/2 \ \ \text{if n is odd.}
\end{cases}
$$
By an abuse of notation we will sometime refer to $\hat J_n$ as the set of all the above solutions as in the following proposition.

\begin{proposition}\label{connectingthetwo} The connection between the two sequences is given by
$$J_n=\begin{cases} 2+2\underset{s\in \hat J_n}{\sum} 2^{m_s} \ \ \text{if n is even} \\ \\
2^{(n+1)/2}+2\underset{s\in \hat J_n}{\sum} 2^{m_s}  \ \ \text{if n is odd.}
\end{cases}$$
where $m_s$ is the number of $0$'s  in the vector $s$.
\end{proposition}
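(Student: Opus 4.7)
The plan is to write the sum in (\ref{BCBP}) in ``folded'' form via the symmetry $\binom{n}{k}=\binom{n}{n-k}$ and then count the preimages of each folded vector. Concretely, for any $\delta\in\{-1,1\}^{n+1}$ I set $\epsilon_k:=\frac{\delta_k+\delta_{n-k}}{2}\in\{-1,0,1\}$ for the indices $k$ in the first half. The map $\delta\mapsto \epsilon$ is surjective onto $\{-1,0,1\}^{\lceil(n+1)/2\rceil}$ (or its relevant subset), and the fiber over $\epsilon$ has size $2^{m_\epsilon}$, where $m_\epsilon$ is the number of $k$ with $\epsilon_k=0$: indeed, $\epsilon_k=\pm 1$ forces $\delta_k=\delta_{n-k}=\epsilon_k$, while $\epsilon_k=0$ leaves the two choices $(\delta_k,\delta_{n-k})\in\{(1,-1),(-1,1)\}$ free.

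For \textbf{$n$ even}, folding (\ref{BCBP}) gives
\[
2\sum_{k=0}^{n/2-1}\epsilon_k\binom{n}{k}+\delta_{n/2}\binom{n}{n/2}=0,
\]
so $\sum_{k=0}^{n/2-1}\epsilon_k\binom{n}{k}=\pm\tfrac{1}{2}\binom{n}{n/2}$, the sign being determined by $\delta_{n/2}$. I will check (using $\sum_{k=0}^n(-1)^k\binom{n}{k}=0$) that the choice $\delta_{n/2}=(-1)^{n/2}$ yields exactly the sign $(-1)^{n/2+1}$ in (\ref{neq1}) and that the trivial $\delta_k=(-1)^k$ folds to $\epsilon=s_0$. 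Then letting $A$ be the set of $\epsilon$ satisfying (\ref{neq1}), the count of $\delta$-solutions with $\delta_{n/2}=(-1)^{n/2}$ is $\sum_{\epsilon\in A}2^{m_\epsilon}$; flipping every $\delta_k$ sends solutions with $\delta_{n/2}=(-1)^{n/2}$ bijectively to those with $\delta_{n/2}=(-1)^{n/2+1}$, giving a total $J_n=2\sum_{\epsilon\in A}2^{m_\epsilon}$. Separating the trivial $s_0$ (with $m_{s_0}=0$) from $A=\hat J_n\cup\{s_0\}$ yields the claimed formula $J_n=2+2\sum_{s\in\hat J_n}2^{m_s}$.

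For \textbf{$n$ odd}, folding gives $\sum_{k=0}^{(n-1)/2}\epsilon_k\binom{n}{k}=0$, which is (\ref{neq2}). Let $B$ be the set of $\epsilon\in\{-1,0,1\}^{(n+1)/2}$ satisfying this equation; then $J_n=\sum_{\epsilon\in B}2^{m_\epsilon}$. The all-zero trivial $s_0$ contributes $2^{(n+1)/2}$, accounting for the $2^{(n+1)/2}$ trivial $\delta$-solutions listed in the introduction. The remaining nontrivial elements of $B$ come in pairs $\{\epsilon,-\epsilon\}$ (the map $\epsilon\mapsto-\epsilon$ is a fixed-point-free involution on $B\setminus\{s_0\}$, with $m_{-\epsilon}=m_\epsilon$), so $\sum_{\epsilon\in B\setminus\{s_0\}}2^{m_\epsilon}=2\sum_{s\in\hat J_n}2^{m_s}$ by the definition $\hat J_n=\tilde J_n/2$. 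This gives $J_n=2^{(n+1)/2}+2\sum_{s\in\hat J_n}2^{m_s}$.

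There is no real obstacle; the only places to be careful are (i) verifying the sign $(-1)^{n/2+1}$ in the even case, which reduces to a short calculation with the alternating-sum identity, and (ii) making sure that in the odd case the trivial contribution of $2^{(n+1)/2}$ produced by folding matches exactly the $2^{(n+1)/2}$ trivial $\delta$-solutions identified in the introduction, rather than double-counting anything through the $\epsilon\mapsto -\epsilon$ pairing.
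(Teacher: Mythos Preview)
Your argument is correct and is essentially the same as the paper's: both rely on the folding map $\delta\mapsto\epsilon$ given by $\epsilon_k=\tfrac{\delta_k+\delta_{n-k}}{2}$, the observation that the fiber over $\epsilon$ has size $2^{m_\epsilon}$, and the global sign-flip $\delta\mapsto-\delta$ to account for the factor of~$2$. You simply run the bijection in the direction $\delta\to\epsilon$ while the paper writes it as $v\to w$, and you spell out the odd case that the paper leaves to the reader.
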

\proof  {\bf Case $n$ even:}  For every vector  $v=[v_0,v_2,...,v_{k-1}]$ with $k=n/2$ in $\hat J_n$ we have

$$\sum_{i=0}^{k-1}v_{i}\binom{n}{i} =(-1)^{k+1} \frac{1}{2} \binom{n}{k},\ \  \text{or}$$

$$\sum_{i=0}^{k-1}2 v_{i}\binom{n}{i}  +(-1)^{k} \binom{n}{k}=0 \Leftrightarrow $$

$$\sum_{i=0}^{n} w_{i}\binom{n}{i}=0, $$

\noindent where $w=[w_0,w_1,...,w_n]\in \{-1,1\}^{n+1}$, $w_i+w_{n-i}=2 v_i$, $i=0,...,k-1$ and $w_k=(-1)^{n/2}$.
We observe that for each $v_i=1$ or $v_i=-1$, the values of $w_i$ and $w_{n-i}$ are uniquely determined.  But for $v_i=0$,
there are two possible choices for $w_i$ and $w_{n-i}$, i.e., $\{(-1,1),(1,-1)\}$.  Hence for each $v$ we can find $2^{m_v}$
such possible $w$ which are solutions for the (BCBP), where $m_v$ is the number of zeros in $v$. The total number of solutions generated by $v$
have to be doubled since for every $w$ discussed above we can consider $-w$ which is also a solution for the (BCBP).
Since the trivial solution $s_0$ has no zeroes we obtain that $J_n=2+2\underset{s\in \hat J_n}{\sum} 2^{m_s}$.

  {\bf Case $n$ odd:} A similar argument can be employed  in this case. \eproof

\vspace{0.1in}

\noindent It is not difficult to see that

$$\hat J_{1}=\hat J_{2}=\tilde J_{3}=\hat J_{4}=\hat J_{5}=\hat J_{6}=\hat J_{7}=0,$$

\noindent and the first nonzero term in the sequence $\hat J_n$ is, as expected, $\hat J_8=1$.
We will illustrate a simple technique which is at the heart of our algorithm, in the special case   $\hat J_{14}=2$.

We have to look at the equation

\begin{equation}\label{eq14}\sum_{i=0}^6 \binom{14}{i}x_i=\frac{1}{2} \binom{14}{7}, \ \ x_i\in \{-1,0,1\}. \end{equation}

\n Let us observe that  a brute force solution for this little problem means to search through $3^7=2187$ possibilities. Instead, let us first notice that (\ref{eq14}) is equivalent to

$$x_0+14x_1+91x_2+364x_3+1001x_4+2002x_5+3003x_6=1716,  \ \ x_i\in \{-1,0,1\}.  $$

Projecting this modulo $7$, it implies that $x_0+6\equiv 0$ (mod $7$). The only option is $x_0=1$.
Taking (\ref{eq14}) modulo 13,  we have  $x_0+x_1\equiv 0$ (mod 13). This leads to only one option
$(x_0,x_1)\in \{(1,-1)\}$.  Projecting (\ref{eq14}) modulo 11 we obtain
$x_0+3x_1+3x_2+x_3\equiv 0$ (mod 11). Since $1+3+3+1<11$ this is equivalent to simply $x_0+3x_1+3x_2+x_3= 0$. Hence $x_0+x_3\equiv 0$ (mod 3) which basically implies $x_3=-x_0$.
Then $x_1+x_2=0$. This means we have only one solution $(x_0,x_1, x_2,x_3)\in\{(1,-1,1,-1)\} $.
The original equation  is then equivalent to $x_4+2x_5+3x_6=2$. From here we see that $x_5\equiv x_4+1$ (mod 3).
So we have the solutions $(x_4,x_5,x_6)\in\{(-1,0,1),(0,1,0),(1,-1,1)\}$. One of these gives the trivial solution $s_0$, and so
$\hat J_{14}=2$ with the non-trivial vector solutions $[1,-1,1,-1,-1,0,1]$ and $[1,-1,1,-1,0,1,0]$. By Proposition~\ref{connectingthetwo}, this makes $J_n=2+2(2+4)=12$.

\vspace{0.1in}
\section{Some more data}

Table~\ref{table:nontrivbisections} is an update of the results in \cite{IonandCo} and \cite{GR97}. It was  conjectured in \cite{IonandCo} that
$\hat J_{2^n}=1$ if $n$ is odd and $\hat J_{2^n}=0$ if $n$ is even. The numerical evidence still supports this conjecture.
In fact, a similar conjecture, we alluded to in the Introduction,  can be written in a more precise way    as

$$\lim_{n\to \infty} \frac{\#  \{   k|\hat J_{k}=0, 1\le k\le n \}}{n}=\frac{5}{6}.$$
The sequence $\{ n_k \}$ for which $\hat J_{n_k}>0$ behaves in a more or less expected way, although certainly chaotic.
In  Table~\ref{table:nontrivbisections} we codified the values of $n$ with the corresponding results in Theorem ~\ref{thm:nontr}.
One can arrive at other conjectures from this amount of data. For instance, we believe that on the other hand $\underset{k\to \infty} {\limsup} J_{n_k}=\infty$ although
this may seem to be in contradiction with the conjecture that $\Gamma(n)\le 3$.

For $\hat J_n>0$, the particular nontrivial solutions may give rise to new solutions. This is how we arrived at  the results of Theorem~\ref{thm:nontr}.  So, we have nontrivial solutions for $n=6k+2$, $k\in \mathbb N$. It is somewhat surprising that most of the time this is the only solution, i.e, $\hat J_{6k+2}=1$ for most $k$.  But if this is corroborated with other  situations in Theorem~\ref{thm:nontr}, we may see  $\hat J_n$ getting bigger. So far   $\hat J_{62}=8$ is the biggest value we encountered.
 These solutions and many others have something in common:
they contain a big number of alternating signs.

So, it is useful to employ the following identity:

\begin{equation}\label{combinaotrialid}
\sum_{j=0}^{\ell}(-1)^j{n \choose j}=(-1)^{\ell}{n-1\choose \ell},
\end{equation}

\noindent which can be proved easily by induction on $\ell$ and via the Pascal's identity ${n \choose \ell +1}={n -1\choose \ell +1}+{n -1\choose \ell }$.
Using this identity we can write the nontrivial solutions for $n=14$ in the following way:

$$-{13 \choose 3}-{14 \choose 4}+{14 \choose 6}=\frac{1}{2}{14 \choose 7} \ \text{and} \ \ -{13 \choose 3}+{14 \choose 5}=\frac{1}{2}{14 \choose 7}$$
 
The next theorem summarizes the various known (infinite) families of values of $n$ with non-trivial solutions to the (BCBP).
We include only proofs for parts (1) and (2) to give an idea of how to adapt to the new sequence $\hat J_n$.
\begin{thm}
\label{thm:nontr}
We have $\hat J_n >0$ for
\begin{enumerate}
\item   $\boxed{\heartsuit}$  $n\equiv 2$ (mod 6), (Theorem 3.6, \cite{GR97})
\item   $\boxed{\sharp}$ $n=4k^2-3, k\geq 2$ ($n=13, 33, 61, 97, 141, 193, \cdots $) (Theorem 3.10, \cite{GR97})
\item   $\boxed{\spadesuit}$  $n=4k^2-2, k\geq 2$, ($n=14, 34, 62, 98, 142, 194, \cdots $)(Theorem 3.8, \cite{GR97})
\item   $\boxed{\clubsuit}$ $5n^2+12n+8 = m^2$, ($n=14, 103, 713, 4894, 33551, ...$) (Theorem 3.9, \cite{GR97} or Theorem 12, \cite{IonandCo})
\item   $\boxed{\flat}$ $8n^2+1=m^2$, ($n=35, 1189,\cdots $) (Theorem 4.4, \cite{GR97})
\end{enumerate}\end{thm}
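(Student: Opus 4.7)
The plan is to prove parts (1) and (2) by exhibiting explicit nontrivial elements of $\hat J_n$. In each case the core task is to find a short combination of the binomial coefficients $\binom{n}{i}$, with coefficients in $\{-1,0,1\}$, that vanishes because of an arithmetic identity forced by the specific form of $n$. The adaptation to the $\hat J_n$ language should be routine via Proposition~\ref{connectingthetwo}.

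For part (1), with $n=6m+2$ and $m\ge 1$, the key observation is the integer ratio
$$\frac{\binom{n}{2m+1}}{\binom{n}{2m}}=\frac{n-2m}{2m+1}=\frac{4m+2}{2m+1}=2.$$
Combined with $\binom{n}{4m+2}=\binom{n}{2m}$ (since $n-(4m+2)=2m$), this yields the three-term identity $\binom{n}{2m+1}-\binom{n}{2m}-\binom{n}{4m+2}=0$. I would then define $\delta\in\{-1,1\}^{n+1}$ by flipping the trivial $\delta^{triv}_i=(-1)^i$ at exactly the three positions $2m$, $2m+1$, $4m+2$; the change in $\sum \delta_i\binom{n}{i}$ equals $2$ times the three-term identity and hence vanishes, so $\delta$ solves (\ref{BCBP}). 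Because $n/2=3m+1\notin\{2m,2m+1,4m+2\}$ for $m\ge 1$, the middle entry $\delta_{n/2}$ is unchanged, so Proposition~\ref{connectingthetwo} applies and the resulting $v$ agrees with the trivial $s_0$ everywhere except $v_{2m}=-1$ and $v_{2m+1}=0$; in particular $v\ne s_0$, giving $\hat J_n\ge 1$.

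For part (2), with $n=4k^2-3$ odd, I would work directly in the $\epsilon$-formulation of (\ref{neq2}) and try the four-term ansatz $\epsilon_q=-1$, $\epsilon_{q+1}=\epsilon_{q+2}=1$, $\epsilon_{q+3}=-1$, with all other $\epsilon_i=0$. Writing out $-\binom{n}{q}+\binom{n}{q+1}+\binom{n}{q+2}-\binom{n}{q+3}=0$, expanding via the ratio formula $\binom{n}{i+1}/\binom{n}{i}=(n-i)/(i+1)$, and setting $A=n-q-2$, $B=n-q$, one finds the relation collapses to $(k+1)A(A-1)=(k-1)B(B-1)$. The choice $q=2k^2-k-3$ makes $A=2k^2-k$ and $B=2k^2+k$, and the factorizations
$$A=k(2k{-}1),\quad A-1=(k{-}1)(2k{+}1),\quad B=k(2k{+}1),\quad B-1=(k{+}1)(2k{-}1)$$
exhibit both sides as $k(k-1)(k+1)(2k-1)(2k+1)$, so the identity holds. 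The range conditions $q\ge 0$ and $q+3\le (n-1)/2$ both reduce to $k\ge 2$, matching the hypothesis, and the resulting $\epsilon$ is manifestly nonzero.

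The main obstacle is locating the correct indices. In part~(1) the ratio identity $\binom{n}{j+1}=2\binom{n}{j}$ has the unique solution $j=2m$ precisely when $n\equiv 2\pmod 6$, so the three perturbation sites are dictated by $n$. In part~(2) the shift $q=2k^2-k-3$ is far less obvious: one must first guess that a four-term identity (rather than three-term) is needed for odd $n$, and then pin down $q$ from a Diophantine condition which, happily, is exactly the hypothesis $n=4k^2-3$. Once the sites are found, the algebraic verifications are elementary, and the passage from the $\delta$-language to the $\hat J_n$-language in part~(1) is pure bookkeeping via Proposition~\ref{connectingthetwo}.
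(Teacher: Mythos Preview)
Your arguments for both parts are correct, and in fact they construct exactly the same nontrivial elements of $\hat J_n$ as the paper does; only the verification routes differ.

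For part~(1) the paper works directly in the $\epsilon$-formulation~(\ref{neq1}): it writes down the vector that agrees with $s_0$ except at positions $2k$ and $2k+1$, and checks (\ref{neq1}) by telescoping via identity~(\ref{combinaotrialid}), ultimately reducing to $\binom{n}{2k}=\binom{n-1}{2k+1}-\binom{n-1}{2k-1}$. You instead perturb the trivial $\delta$-solution of (\ref{BCBP}) at the three positions $2m,\,2m+1,\,4m+2$ and use the cleaner one-line identity $\binom{n}{2m+1}=2\binom{n}{2m}=\binom{n}{2m}+\binom{n}{4m+2}$, then fold via Proposition~\ref{connectingthetwo}. Your route is shorter and avoids the telescoping altogether; the paper's route has the advantage of staying entirely inside the $\hat J_n$ language.

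For part~(2) your four-term identity and the index $q=2k^2-k-3$ coincide with the paper's (there $s=q+3=2k^2-k$); the paper simply remarks that the identity is equivalent to the quadratic $4s^2-4s(n-3)+(n-3)^2=n-3$, while you verify it by explicit factorization. One slip to fix: you define $A=n-q-2$, but with $n=4k^2-3$ and $q=2k^2-k-3$ this gives $2k^2+k-2$, not $2k^2-k$. The value $2k^2-k$ you actually use is $q+3$; with $A:=q+3$ and $B:=n-q$ the reduction $(n-2q-1)(q+2)(q+3)=(n-q)(n-q-1)(n-2q-5)$ indeed becomes $(k+1)A(A-1)=(k-1)B(B-1)$ after substituting $n-2q-1=2(k+1)$ and $n-2q-5=2(k-1)$, and your factorizations then finish the job.
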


\begin{proof} $\bf \boxed{\heartsuit}$ For part 1, using the same technique as in  Introduction, we find that $\hat J_8=1$, with the unique nontrivial solution
$[1,-1,-1,0]$. We see the same pattern in one of the vector solutions of $\hat J_{14}$, namely $[1,-1,1,-1,-1,0,1]$. As pointed out above this can be written  as
$$-{13 \choose 3}-{14 \choose 4}+{14 \choose 6}=\frac{1}{2}{14 \choose 7} $$

\n via the identity  (\ref{combinaotrialid}). For $n=20$, we can write the non-trivial solution as

$$-{19 \choose 5}-{20 \choose 6}+{20 \choose 8}-{20 \choose 9}=-\frac{1}{2}{20 \choose 10} .$$

\n This suggests the general non-trivial solution if $n=6k+2$:

\begin{equation}\label{firstidentity}
\sum_{j=0}^{2k-1}(-1)^j{n \choose j}-{n \choose 2k}+\sum_{j=2k+2}^{3k}(-1)^j{n \choose j}=(-1)^{3k+2}\frac{1}{2}\binom{n}{n/2}.
\end{equation}
To prove this identity, we use (\ref{combinaotrialid}). Observe that (\ref{firstidentity}) is equivalent to
$$-{n-1 \choose 2k-1}-{n \choose 2k}+(-1)^{3k}{n-1 \choose 3k}-(-1)^{2k+1}{n-1 \choose 2k+1}=(-1)^{3k+2}\binom{n-1}{3k}.$$

\n After cancelations, this reduces to
$${n \choose 2k}={n-1 \choose 2k+1}-{n-1 \choose 2k-1} \Leftrightarrow n=6k+2. $$

$\bf \boxed{\sharp}$ For part 2,  since $n=4k^2-3$ we need to prove the identity

\begin{equation}\label{secondtidentity}
\binom{n}{s-3}-\binom{n}{s-2}-\binom{n}{s-1}+\binom{n}{s}=0,
\end{equation}
\n where $s=2k^2-k\le (n-1)/2$ ($k\ge 2$). Solving (\ref{secondtidentity}) for $s$ one obtains the quadratic $4s^2-4s(n-3)+(n-3)^2=n-3$
which has  the given solution ($n$ must be odd).

We leave the proofs for the rest of the cases for the interested reader, since they can be adapted to our scenario from \cite{GR97}. \end{proof}

   \begin{center}
   \begin{table} [ht]
\caption{Number of nontrivial Binomial Coefficients Bisections}{}
\vskip.2cm
  \allowdisplaybreaks[4]
  \centering
  \begin{tabular}{||c c | c c | c c|c c|c c|c c ||}
 \hline
 $n$ & $\hat J_n$ & $n$ & $\hat J_n$ & $n$ & $\hat J_n$& $n$ &  $\hat J_n$ & $n$ &  $\hat J_n$  & $n$ &  $\hat J_n$   \\ [0.5ex]
 \hline\hline
 1 & 0 & 25 &  $0$ &  49 &  0 &   \boxed{73} &   2   &   $\sharp$   $\boxed{97}$  &  1 &  121  &   0 \\
 \hline
 $\dagger$ 2 & 0 &  $\heartsuit$ \boxed{26} & $1$ & $\heartsuit$  \boxed{50}  &  1&  $\heartsuit$  \boxed{74} &  4 &   $\heartsuit$ $\spadesuit$ $\boxed{98}$ &   3 &     $\heartsuit$ $\boxed{122}$ &  1  \\
 \hline
 3 & 0 &     27  & $0$  & 51  &   0 &  75 &  0 &  99  &   0 & 123   &  0  \\
 \hline
 4 & 0 &  $\dagger$ 28 & $0$  & $\dagger$ 52   &   0 &   76 &  0 &  $\dagger$ 100 &   0  &   124  &  0\\
 \hline
 5 & 0 &  $\boxed{29}$  & $1$  & 53 &  0 &  77&    0  &     101&   0 &   125  &  0\\
 \hline
$\dagger$ 6 &  0 &  $\dagger$ 30  &   $0$  &  \boxed{54} & 1 &$\dagger$  78 & 0 &  $\dagger$ 102&    0&  $\dagger$  126  &  0 \\
 \hline
 7 & 0 &   \boxed{31}   &  $2$ &  55 & 0 &  79 &   0 &   $\clubsuit$  $\boxed{103}$ &    1&   127  &   0\\
 \hline
 $\heartsuit$ \boxed{8} & $\circled{1}$ &  $\heartsuit$   \boxed{ 32}  & $1$  &   $\heartsuit$ \boxed{56}   & 1  &   $\heartsuit$ \boxed{80} &  1 &   $\heartsuit$ \boxed{104}   &    2&   $\heartsuit$ \boxed{128}&    1\\
 \hline
 9 & 0  & $\sharp$ \boxed{33}  & $1$ & 57 & 0 &  81 &   0 &   105&   0&    129 &   0\\
 \hline
$\dagger$ 10 & 0 & $\spadesuit$ \boxed{34} & $\circled{5}$  &  $\dagger$ 58  & 0 &  $\dagger$ 82 &  0 &    $\dagger$106&  0 &   $\dagger$   130 &   0 \\
 \hline
 11 & 0  &  $\flat$ \boxed{35} & $2$ &  59 &  0&  83&  0&   107&     0&  131  &  0\\
 \hline
 $\dagger$ 12 & 0 &  $\dagger$ 36 & $0$  &   $\dagger$ 60&  0&   84&  0 &    $\dagger$ 108& 0 &  132 &    0\\
 \hline
$\sharp$  \boxed{13} &  $1$ &  37 & 0 & $\sharp$  61 &  1&  85 &   0&   109&    0&  133 &  0 \\
 \hline
$\heartsuit$ $\spadesuit$ $\clubsuit$ \boxed{ 14} & $\circled{2}$ &  $\heartsuit$ \boxed{38}  &  $2$ &  $\heartsuit$ $\spadesuit$ 62&  $\circled{8}$ &   $\heartsuit$ \boxed{86}  &   1&   $\heartsuit$ \boxed{110} &   1&     $\heartsuit$ \boxed{134} &   1\\
 \hline
 15 &  $0$ & 39 & $0$ & \boxed{63} & 1 &  87 &   0&  111 &    0&    135 &   0\\
 \hline
 $\dagger$16 & 0 & $\dagger$ 40 &  $0$ & 64 &  0 &  $\dagger$  88 &   0&   $\dagger$ 112&   0 &   $\dagger$  136 &   0\\
 \hline
 17 &  $0$ & \boxed{41} &  4  & 65 &  0&   89&  0&  113 &    0&     137&   0\\
  \hline
 $\dagger$ 18 & 0  &  $\dagger$ 42 &  0 & $\dagger$ 66 &  0&  90 &  0&   114&   0&  $\dagger$  138  &   0\\
  \hline
 19 &  $0$ & 43 &  0 & 67 &  0&  91 &   0&   115&   0 &    139 &   0\\
  \hline
 $\heartsuit$ \boxed{20} & $1$ & $\heartsuit$ \boxed{44} & 2  & $\heartsuit$  $ \boxed{68}$  &  1 & $\heartsuit$ \boxed{ 92} &  1&  $\heartsuit$ \boxed{ 116} &   1 &   $\heartsuit$ \boxed{ 140}&   1\\
 \hline
   21 & $0$ & 45 &   0 &   69 &  0 &  93 &  0&  117 &   0 &  $\sharp$   \boxed{141} &   1\\
 \hline
$\dagger$ 22 & $0$ & $\dagger$ 46 &  0 &  $\dagger$ 70&   0 &  94 &  0 &  118 &   0 &  $\spadesuit$\boxed{142}  &  1 \\
 \hline
  23 & $0$ & \boxed{47} &  1 & 71 &  0  & 95  &  0 &  119  &   0 &    143  & 0  \\
\hline
   \boxed{24} & $2$ & \boxed{48} &  1 & $\dagger$ 72 &  0 &   $\dagger$ 96 &   0&  120  &   0 &    144  & 0 \\  [1ex]
 \hline \hline
\end{tabular}
\label{table:nontrivbisections}
\end{table}
%\vskip-.2cm
\end{center}

\noindent On the other hand, trivial solutions seemed to be predominant in the Table~\ref{table:nontrivbisections}.
We include here one information in this direction from \cite{GR97} (Theorem~2.6). For completion we include the proof of this here since it is also
in the spirit of our algorithm, but in a very special/fortunate  case. We search for a similar (infinite) sequence of odd numbers but all efforts turned unsuccessful.

\begin{thm}
\label{thm:trivial}
We have $\hat J_n =0$ for ($\dagger$) $n+1$ is an odd  prime.
\end{thm}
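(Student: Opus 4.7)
\medskip
\noindent\textbf{Proof proposal.} Write $p=n+1$, assumed to be an odd prime; in particular $n$ is even. The plan is to reduce the BCBP equation \eqref{BCBP} modulo $p$ and exploit the well-known congruence
$$\binom{n}{i}=\binom{p-1}{i}=\frac{(p-1)(p-2)\cdots(p-i)}{i!}\equiv (-1)^i\pmod{p},\qquad 0\le i\le n,$$
which follows from $p-j\equiv -j\pmod p$ and the fact that $i!$ is invertible mod $p$ since $i<p$. So, starting from any solution $w=[w_0,\ldots,w_n]\in\{-1,1\}^{n+1}$ of \eqref{BCBP}, the congruence $\sum_{i=0}^{n}w_i\binom{n}{i}\equiv 0\pmod p$ becomes
$$S:=\sum_{i=0}^{n}w_i(-1)^i\equiv 0\pmod p.$$

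The crucial observation is that $S$ is a sum of $n+1=p$ terms, each equal to $\pm 1$; consequently $|S|\le p$ and $S\equiv p\equiv 1\pmod 2$. Combined with $p\mid S$, this forces $S=\pm p$, i.e.\ every summand $w_i(-1)^i$ has the same sign. Therefore $w_i=(-1)^i$ for all $i$, or $w_i=-(-1)^i$ for all $i$. These are precisely the two fully alternating vectors, which are exactly the two trivial BCBP solutions accounted for by the additive ``$+2$'' in Proposition~\ref{connectingthetwo}. Translating back through the $w\leftrightarrow v$ correspondence of that proposition (with $v_i=(w_i+w_{n-i})/2$ and $w_{n/2}=(-1)^{n/2}$), both of these $w$'s correspond to the single trivial $s_0=[1,-1,1,-1,\ldots]$ in the $\hat J_n$ bookkeeping. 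Hence no non-trivial $v\in\{-1,0,1\}^{n/2}$ satisfies \eqref{neq1}, so $\hat J_n=0$.

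\medskip
There is no real obstacle: the argument is essentially just Fermat/Lucas modulo an odd prime, and the only subtle point is the parity remark that prevents $S$ from being $0$. That is what makes $p=n+1$ prime (odd) so special — if $n+1$ were even, the same mod-$p$ reduction would only yield $S\equiv 0\pmod p$ without eliminating $S=0$, and many non-trivial solutions could survive. It is exactly this parity-plus-bound argument that forces $|S|=p$ and kills every would-be non-trivial solution.
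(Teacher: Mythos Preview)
Your proof is correct and essentially identical to the paper's: both reduce the BCBP equation modulo $p=n+1$, use $\binom{p-1}{i}\equiv(-1)^i\pmod p$, and then combine the parity of $S$ with the bound $|S|\le p$ to force $S=\pm p$, yielding only the alternating (trivial) solutions. The only difference is cosmetic --- you spell out the translation back through Proposition~\ref{connectingthetwo}, whereas the paper passes directly from ``only the two trivial BCBP solutions'' to $\hat J_n=0$.
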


\begin{proof} We let $p=n+1$ and observe that  $n\equiv -1$ (mod $p$). First,  we show that $\binom{n}{j}\equiv (-1)^j$ (mod $p$), for every $j\in \{0,1,\ldots,n\}$.
This is clearly true for $j=0$.
Since, every $j\in \{1,\ldots,n\}$ has an inverse modulo $p$, we have for $j\in \{1,\ldots,n\}$
 
$$\binom{n}{j}  \equiv\frac{n(n-1)\cdots (n-j+1)}{j!}\\
 \equiv \frac{(-1)(-2)\cdots (-1-j+1)}{j!}\equiv (-1)^j\ \pmod p.$$

\n Then, if  $ [\delta_0,\ldots,\delta_n]$ is a solution of the (BCBP) problem
$$0=\sum_{j=0}^n \delta_j \binom{n}{j}\equiv \sum_{j=0}^n (-1)^j \delta_j:=\Delta \ \pmod p.$$
The number $\Delta=\sum_{j=0}^n (-1)^j \delta_j$ is an odd number ($p$ is an odd prime) satisfying

\begin{equation}\label{eq4}
|\Delta|\le \sum_{j=0}^n  |(-1)^j \delta_j|=\sum_{j=0}^n 1=n+1=p.
\end{equation}

 \n Since $\Delta$ cannot be zero, the only possible values of $\Delta$ are $p$ or $-p$.
Then the equality $|\Delta|=p= n+1$ in \eqref{eq4},  forces $\delta_j=\pm (-1)^j$, for all $j$. Therefore, we have only the two trivial solutions, that is,  $\hat J_n =0$.
\end{proof}

In the same spirit, we have the following result. 

\begin{thm}
\label{thm:solutionsforspecialn}
Suppose that $[\delta_0,\delta_1,\ldots,\delta_{n-1} ,\delta_n]$ is a solution of the (BCBP) for $n$ prime and $\frac{n+1}{2}$ also a prime
($n$ is in the sequence A005383). Then the folded sequence, $\eta=[\eta_0,\eta_,...,\eta_{(n-1)/2}]$ defined by $\eta_j=(-1)^{j}\frac{\delta_j+\delta_{n-j}}{2}$,
$j=0,1,2,...,(n-1)/2$ has the same number of $1$'s as the number of $-1$'s. \end{thm}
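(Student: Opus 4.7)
The plan is to adapt the mod-$p$ projection technique from Theorem~\ref{thm:trivial}, but now apply it with two different primes in parallel: $n$ itself and $p:=(n+1)/2$. Write $n=2p-1$ throughout. The target statement, that $\eta$ has equally many $+1$'s as $-1$'s, is exactly the statement $\sum_{j=0}^{(n-1)/2}\eta_j=0$, so that is what I will aim to prove.

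First I would project the BCBP equation modulo $n$. Since $n$ is prime, $\binom{n}{j}\equiv 0\pmod n$ for $1\le j\le n-1$, so the relation collapses to $\delta_0+\delta_n\equiv 0\pmod n$. As $|\delta_0+\delta_n|\le 2<n$, this forces $\delta_0+\delta_n=0$, i.e.\ $\eta_0=0$. This ``free'' zero at the boundary is what will make the final size estimate strict.

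Next I would project modulo $p$. Since $n=1\cdot p+(p-1)$ in base $p$, Lucas' theorem (or a direct adaptation of the computation in the proof of Theorem~\ref{thm:trivial}) gives
\[
\binom{n}{j}\equiv(-1)^{j}\pmod p\quad (0\le j\le p-1),\qquad \binom{n}{j}\equiv-(-1)^{j}\pmod p\quad (p\le j\le n),
\]
the sign flip in the upper range coming from $(-1)^{j-p}=-(-1)^j$ because $p$ is odd. Folding the sum by substituting $i=n-j$ in the upper half and using $(-1)^{n-i}=-(-1)^i$ (as $n$ is odd), the BCBP equation mod $p$ becomes
\[
0\equiv \sum_{j=0}^{p-1}(-1)^{j}\bigl(\delta_j+\delta_{n-j}\bigr)\pmod p.
\]
Substituting $\delta_j+\delta_{n-j}=2(-1)^{j}\eta_j$ collapses the $(-1)^{j}$ factors and yields $2\sum_{j=0}^{p-1}\eta_j\equiv 0\pmod p$. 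Because $p$ is odd, $2$ is invertible, so $\sum_{j=0}^{p-1}\eta_j\equiv 0\pmod p$.

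Finally I would combine the two projections. Each $\eta_j\in\{-1,0,1\}$, and the sum has $p$ terms, one of which ($\eta_0$) is already zero by Step 1. Therefore
\[
\Bigl|\sum_{j=0}^{p-1}\eta_j\Bigr|\le p-1<p,
\]
and the divisibility forces $\sum_{j=0}^{p-1}\eta_j=0$, which is precisely the desired balance of $+1$'s and $-1$'s. The only case not directly covered is $p=2$ (so $n=3$), where the mod $p$ reduction is vacuous; here one checks by hand that the BCBP equation with $\delta_0+\delta_3=0$ also forces $\delta_1+\delta_2=0$, giving $\eta=[0,0]$. The main technical obstacle is the mod $p$ step: verifying the two-range formula for $\binom{n}{j}\pmod p$ and tracking the sign flip cleanly when folding the sum, but this is essentially a Lucas computation and poses no real difficulty.
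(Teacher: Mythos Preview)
Your proof is correct and follows essentially the same two-prime projection strategy as the paper: use divisibility by $n$ to force $\eta_0=0$, then reduce modulo $q=(n+1)/2$ to obtain $\sum_j\eta_j\equiv 0\pmod q$, and combine with the bound $|\sum_j\eta_j|\le q-1$ to conclude. The only cosmetic difference is that the paper folds the BCBP first and then divides the resulting integer relation by $n$ before reducing modulo $q$ (computing $\tfrac{1}{n}\binom{n}{j}\equiv(-1)^{j-1}\pmod q$ directly), whereas you reduce the unfolded BCBP modulo $p$ via Lucas and fold afterward; both routes land on the same congruence.
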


\begin{proof} We let $q=\frac{n+1}{2}$ and $p=2q-1=n$ be the two primes. The vector  $\eta$ satisfies 

\begin{equation}\label{eqnew}\sum_{j=0}^{\frac{n-1}{2}} (-1)^j \binom{n}{j}\eta_j=0.
\end{equation}

\n It is well known that $\binom{n}{j}$ are divisible by $n=p$ ($n$ is a prime number). This implies $\eta_0=0$. 
Dividing (\ref{eqnew}) by $p$, we obtain

\begin{equation}\label{eqnew2}\sum_{j=1}^{q-1} (-1)^j \frac{\binom{p}{j}}{p}\eta_j=0.
\end{equation}
Let us show that $\frac{\binom{p}{j}}{p}\equiv (-1)^{j-1}$ (mod $q$) for all $j=1,2,..., q-1$. Since $p\equiv -1$ (mod $q$) and for all $j$, $j=1,2,..., q-1$,
we have an inverse $j^{-1}$ modulo $q$ for $j$, we can write 
 
$$\frac{\binom{p}{j}}{p} \equiv\frac{(p-1)\cdots (p-j+1)}{j!}\\
\equiv \frac{(-2)(-3)\cdots (-1-j+1)}{j!}\equiv (-1)^{j-1}\ \pmod q.$$
 This shows that from (\ref{eqnew2}) we can conclude that $\sum_{j=1}^{q-1} \eta_j\equiv 0$ (mod $q$). This can clearly happen if and only if $\sum_{j=1}^{q-1} \eta_j=0$ which implies the statement in the
 theorem. 
\end{proof}
One may  perhaps use this result to provide some sort of positive answer to Q1 in the Introduction, by looking into a subsequence of A005383. Whether or not A005383 is infinite is 
just another open question. 

\begin{center}
\begin{table} [ht]
\caption{Number of nontrivial Binomial Coefficients Bisections (continued)}{}
\vskip.2cm
  \allowdisplaybreaks[4]
  \centering
  \begin{tabular}{||c c | c c | c c|c c|c c|c c ||}
 \hline
 $n$ & $\hat J_n$ & $n$ & $\hat J_n$ & $n$ & $\hat J_n$& $n$ &  $\hat J_n$ & $n$ &  $\hat J_n$  & $n$ &  $\hat J_n$   \\ [0.5ex]
 \hline\hline
 144 & 0 &  \maltese 155 &   &  $\dagger$ 166 &  $0$  &   177 &      &   $\heartsuit$ \boxed{188}  & $\ge 1$ &  199  &   0 \\
 \hline
 145 & 0 &  $\dagger$ 156 & $0$ & 167  &   &  $\dagger$ 178 &  $0$ &   189 &    &    $\heartsuit$ \boxed{200} & $\ge 1$   \\
 \hline
 $\heartsuit$ \boxed{146}& 1 &     157  &  & 168  &    &  179 &   & $\dagger$ 190  &   $0$ & 201   &    \\
 \hline
 147 & 0 &  $\heartsuit$ \boxed{158}& $\ge 1$  &  169  &    &   $\dagger$ 180 &  $0$ &   191 &     &   202  &  \\
 \hline
$\dagger$  148 & 0 &  159  &  & $\heartsuit$ \boxed{170} &   $\ge 1$ &  181 &     &   $\dagger$  192&   $0$ &   203  &  \\
 \hline
149 &  0 & 160  &   &  171 &  &  $\heartsuit$ \boxed{182}   & $\ge 1$ & $\sharp$   \boxed{193} &  $\ge 1$  & 204   &   \\
 \hline
$\dagger$ 150 & 0 &  161   &   & $\dagger$ 172 & 0 &  183 &    &  $\heartsuit$  $\spadesuit$\boxed{194} & $2$     &   205  &   \\
 \hline
151 & 0 & $\dagger$ 162 & $0$  &   173 &   & 184   &   &  195   &    &   $\heartsuit$ \boxed{206}  &  $\ge 1$   \\
 \hline
$\heartsuit$  152 & $ 1$  & 163 &  & 174 &  &  185 &    &   $\dagger$  196&   $0$&    207 &   0\\
 \hline
 153 & $0$ & $\heartsuit$ \boxed{164} & $\ge 1$ &  175  &  &  186 &   &     197&    &     208&   0 \\
 \hline
  154 &  $0$ &  165 &  &   $\heartsuit$ \boxed{176} &   $\ge 1$ &  187&  &    $\dagger$ 198&    $0$ &  209  &  0\\
  \hline \hline
 \end{tabular}
\label{table:nontrivbisections2}
\end{table}
\end{center}

\section{Non-trivial vector solutions}
Using our algorithm we extended the previous tables of non-trivial vector solutions.  Since our new non-trivial vectors differ,
in the way we record them, from the ones in \cite{IonandCo} and the Table 2 in (\cite{GR97}), we updated the previous lists.

\begin{center}
   \begin{table}
\caption{Nontrivial corresponding folded vectors}{}
\vskip.2cm
\allowdisplaybreaks[4]
\centering
\begin{tabular}{||  c | c ||}
 \hline
 $n$  & nontrivial  vectors \\ [0.5ex]
 \hline\hline
 8  & [1,-1,-1, 0]\\
 \hline
 13  & [0,0,0,1,-1,-1,1]\\ [1ex]
 \hline
 14  & [1,-1,1,-1,-1,0,1]\\
     & [1,-1,1,-1,0,1,0]\\
 \hline
 20  & [1,-1,1,-1,1,-1,-1,0,1,-1]\\
 \hline
 24  & [1,-1,-1,-1,1,0,-1,0,1,0,-1,0],\\
    & [-1,1,-1,0,1,1,-1,0,-1,0,1,-1] \\
 \hline
 26  & [1,-1,1,-1,1,-1,1,-1,-1,0,1,-1,1] \\
 \hline
 29  & [0,0,0,0,0,0,0,1,-1,0,1,1,-1,0,0]\\
 \hline
 31  & [0,0,0,1,-1,0,0,-1,-1,0,-1,1,1,-1,-1,1]\\
      & [0,0,0,1,-1,0,0,-1,-1,1,0,1,0,1,0,-1]\\
 [1ex]\hline \hline
\end{tabular}
\label{table: solutions1}
\end{table}
\end{center}

{\small
\begin{center}
 \begin{tabular}{||  c | c ||}
 \hline
 $n$  & nontrivial  folded vectors \\ [0.5ex]
 \hline
\hline 
 32  & [1,-1,1,-1,1,-1,1,-1,1,-1,-1,0,1,-1,1,-1]\\
 33  & [0,0,0,0,0,0,0,0,0,0,0,0,1,-1,-1,1,0]\\
 34  &[1,-1,1,-1,1,-1,-1,1,1,0,-1,0,0,-1,0,0,1]\\
      &[1,-1,1,-1,1,-1,-1,1,1,0,-1,0,0,1,0,-1,1]\\
      &[1,-1,1,-1,1,-1,-1,1,1,0,1,1,-1,-1,0,0,1]\\
       &[1,-1,1,-1,1,-1,-1,1,1,0,1,1,-1,1,0,-1,1]\\
       &[1,-1,1,-1,1,-1,1,-1,1,-1,1,-1,1,0,-1,0,1]\\
35   &[1,-1,1,-1,1,-1,1,-1,1,-1,1,1,0,-1,-1,1,0,0]\\
      &[1,-1,1,-1,1,-1,1,-1,1,-1,1,-1,1,-1,-1,1,0,0]\\
       \hline
38   &[1,-1,1,-1,1,-1,1,-1,1,-1,1,-1,-1,0,1,-1,1,-1,1]\\
     &[1,-1,1,1,1,0,1,1,-1,-1,1,-1,0,0,1,1,1,-1,0]\\
 \hline
 41   &[0,0,0,0,0,1,-1,1,1,1,0,-1,0,-1,1,-1,-1,1,0,0,0]\\
       &[0,0,0,0,0,1,-1,1,1,1,0,-1,0,1,0,-1,-1,1,0,0,0]\\\
       &[0,0,0,0,0,1,0,1,0,0,0,1,-1,-1,1,-1,0,-1,1,0,0]\\
      &[0,0,0,0,0,1,0,1,0,0,0,1,-1,1,0,-1,0,-1,1,0,0]\\
 \hline
 44    &[1,-1,1,-1,1,-1,1,-1,1,-1,1,-1,1,-1,-1,0,1,-1,1,-1,1,-1]\\
    &[1,-1,1,-1,1,-1,1,-1,1,-1,1,-1,1,-1,-1,0,1,-1,1,-1,1,-1]\\
     \hline
47 & [0,0,0,0,0,0,0,0,0,0,0,1,0,0,-1,0,-1,-1,1,0,0,0,0,0]\\
 \hline
 48  & [-1,1,-1,0,1,1,0,0,0,-1,1,0,0,1,0,0,1,0,1,0,0,-1,1,-1]\\
 \hline
50  & [1,-1,1,-1,1,-1,1,-1,1,-1,1,-1,1,-1,1,-1,-1,0,1,-1,1,-1,1,-1,1]\\
\hline
 54   & [1,-1,1,-1,1,-1,1,-1,1,-1,1,-1,1,-1,0,1,1,0,1,-1,0,1,0,-1,1,-1,1]\\
 \hline
 61  & [0,0,0,0,0,0,0,0,0,0,0,0,0,0,0,0,0,0,0,0,0,0,0,0,0,1,-1,-1,1,0,0]\\
 62  & [1,-1, 1,-1,1,-1,1,-1,1,-1,1,-1,1,-1,1,-1,1,-1,1,-1,1,-1,1,-1,1,-1,0,1,0,-1,1]\\
     & [1,-1,1,-1,1,-1,1,-1,1,-1,1,-1,1,-1,1,-1,1,-1,1,-1,-1,0,1,-1,1,-1,1,-1,1,-1,1] \\
     & [1,-1,1,-1,1,-1,1,-1,1,-1,1,-1,1,-1,1,-1,1,-1,1,-1,-1,0,1,-1,1,-1,0,1,0,-1,1] \\
     & [1,-1,1,-1,-1,1,1,-1,1,-1,1,0,-1,0,-1,0,1,1,-1,-1,-1,-1,0,1,1,1,-1,-1,0,0,1] \\
     & [1,-1,1,-1,-1,1,1,-1,1,-1,0,0,1,1,0,-1,-1,-1,0,0,1,0,-1,-1,0,1,0,-1,0,0,1]\\
     & [1,-1,1,-1,-1,1,1,-1,1,-1,0,0,1,1,0,-1,-1,-1,0,0,1,0,-1,-1,0,1,-1,1,-1,0,1]\\
     & [1,-1,1,-1,-1,1,1,-1,1,-1,0,0,1,1,0,-1,-1,-1,0,0,-1,1,-1,-1,0,1,0,-1,0,0,1]\\
     &[1,-1,1,-1,-1,1,1,-1,1,-1,0,0,1,1,0,-1,-1,-1,0,0,-1,1,-1,-1,0,1,-1,1,-1,0,1] \\
      \hline
 63  &[1,-1,1,-1,1,1,-1,0,1,0,-1,0,-1,1,-1,1,-1,0,-1,1,1,1,-1,1,1,-1,-1,1,1,-1,0,0]\\
 \hline
 68  &[1,-1,1,-1,1,-1,1,-1,1,-1,1,-1,1,-1,1,-1,1,-1,1,-1,1,-1,-1,0,1,-1,1,-1,1,-1,1,-1,1,-1]\\
 \hline
 73  &[0,0,0,0,0,0,0,0,0,0,0,0,0,0,0,1,0,-1,0,0,-1,-1,-1,-1,1,0,-1,1,-1,-1,1,0,0,0,0,0,0] \\
  &[0,0,0,0,0,0,0,0,0,0,0,0,0,0,0,1,0,1,1,1,0,0,1,0,-1,0,1,0,-1,-1,1,0,0,0,0,0,0]\\
\hline
 74  & [1,-1,1,-1,1,-1,1,-1,1,-1,1,-1,1,-1,1,-1,-1,0,1,1,1,1,-1,1,0,0,0,0,-1,0,1,-1,1,-1,1,-1,1]\\
 &[1,-1,1,-1,1,-1,1,-1,1,-1,1,-1,1,-1,1,-1,0,-1,1,1,1,0,-1,0,0,1,-1,0,0,1,0,-1,1,-1,1,-1,1]\\
 &[1,-1,1,-1,1,-1,1,-1,1,-1,1,-1,1,-1,1,-1,0,0,-1,0,-1,1,-1,0,0,1,-1,0,0,1,0,-1,1,-1,1,-1,1]\\
 &[1,-1,1,-1,1,-1,1,-1,1,-1,1,-1,1,-1,1,-1,1,-1,1,-1,1,-1,1,-1,-1,0,1,-1,1,-1,1,-1,1,-1,1,-1,1]\\
\hline
 80 &[1,-1,1,-1,1,-1,1,-1,1,-1,1,-1,1,-1,1,-1,1,-1,1,-1,1,-1,1,-1,1,-1,-1,0,1,-1,1,-1,1,-1,1,-1,1,-1,1,-1]\\
 \hline
 86 & [1,-1,1,-1,1,-1,1,-1,1,-1,1,-1,1,-1,1,-1,1,-1,1,-1,1,-1,1,-1,1,-1,1,-1,-1,0,1,-1,1,-1,1,-1,1,-1,1,-1,1,-1,1]\\
 \hline
 92  & [1,-1,1,-1,1,-1,1,-1,1,-1,1,-1,1,-1,1,-1,1,-1,1,-1,1,-1,1,-1,1,-1,1,-1,1,-1,-1,0,1,-1,1,-1,1,-1,1,-1,1,-1,1,-1,1,-1]\\
 \hline
 97  &[0,0,0,0,0,0,0,0,0,0,0,0,0,0,0,0,0,0,0,0,0,0,0,0,0,0,0,0,0,0,0,0,0,0,0,0,0,0,0,0,0,0,1,-1,-1,1,0,0,0]\\
 \hline
 98  &{\small [1,-1,1,-1,1,-1,1,-1,1,-1,1,-1,1,-1,1,-1,1,-1,1,-1,1,-1,1,-1,1,-1,1,-1,1,-1,1,-1,-1,0,1,-1,1,-1,1,-1,1,-1,1,-1,1,-1,1,-1,1]}\\
      &{\small  [1,-1,1,-1,1,-1,1,-1,1,-1,1,-1,1,-1,1,-1,1,-1,1,-1,1,-1,1,-1,1,-1,1,-1,1,-1,1,-1,-1,0,1,-1,1,-1,1,-1,1,-1,1,0,-1,0,1,-1,1]}\\
      &{\small [1,-1,1,-1,1,-1,1,-1,1,-1,1,-1,1,-1,1,-1,1,-1,1,-1,1,-1,1,-1,1,-1,1,-1,1,-1,1,-1,1,-1,1,-1,1,-1,1,-1,1,-1,1,0,-1,0,1,-1,1]}\\
\hline \hline
\end{tabular}
\label{table: solutions2}
\end{center}}
The rest of the non-trivial solution vectors for $n\in\{103,104,110, 116,122, 128,134, 140,141\}$ are only the expected ones.

\section{The algorithm implemented in a Maple Program}

The main idea is to reduce the problem to a number of congruencies of smaller number of variables. The second idea is to make the sequence of variables
nested, i.e., the set of variables at step $k$ is included in the set of variables at step $k+1$.
First let us introduce the following sequence
$$D_i:=\gcd \{  \binom {n}{j} : j=i,i+1,...,\lfloor \frac{n}{2}\rfloor \}, \ \ \ i=0,1,2,...,\lfloor \frac{n}{2}\rfloor .
$$
Clearly $\{D_i\}$ is a non-decreasing sequence. This sequence can be constant for some values of $i$, say $D_s=D_{s+1}=...=D_{t}$,
for some $s,t$ ($s<t$). For that reason we remove duplications and redefine $\{d_i\}$ strictly increasing such that
$\{d_1,d_2,...\}=\{D_2,D_3, ...\}$ ($D_1=1$ so we eliminate this trivial situation). We add to the list $d:=[d_1,d_2,...]$ the value  $2^n$, to make sure the last congruency is in fact equivalent to one of the equalities (\ref{neq1}) or (\ref{neq2}).   We solve then the equation (\ref{neq1}) or (\ref{neq2}) modulo $d_i$,

$$\sum_{j=0}^{m} \binom{n}{j} x_j\equiv c_i\ (mod\ \  d_i) ,m=\begin{cases} n/2-1\ \text{ if n is even} \\ (n-1)/2 \ \ \text{if ne is odd}  \\  \end{cases}$$

\n for the variables involved and the solutions obtained are carried into the next equation (mod $d_{i+1}$).
At each step we need to solve for a relatively small number of variables. The number of solutions at each step, say $\{s_i\}$, has an interesting distribution.
$$\underset{Figure\ 2,\  \text{The sequence} \ \{s_i\} \ \text{for} \ n=62   } {\epsfig{file=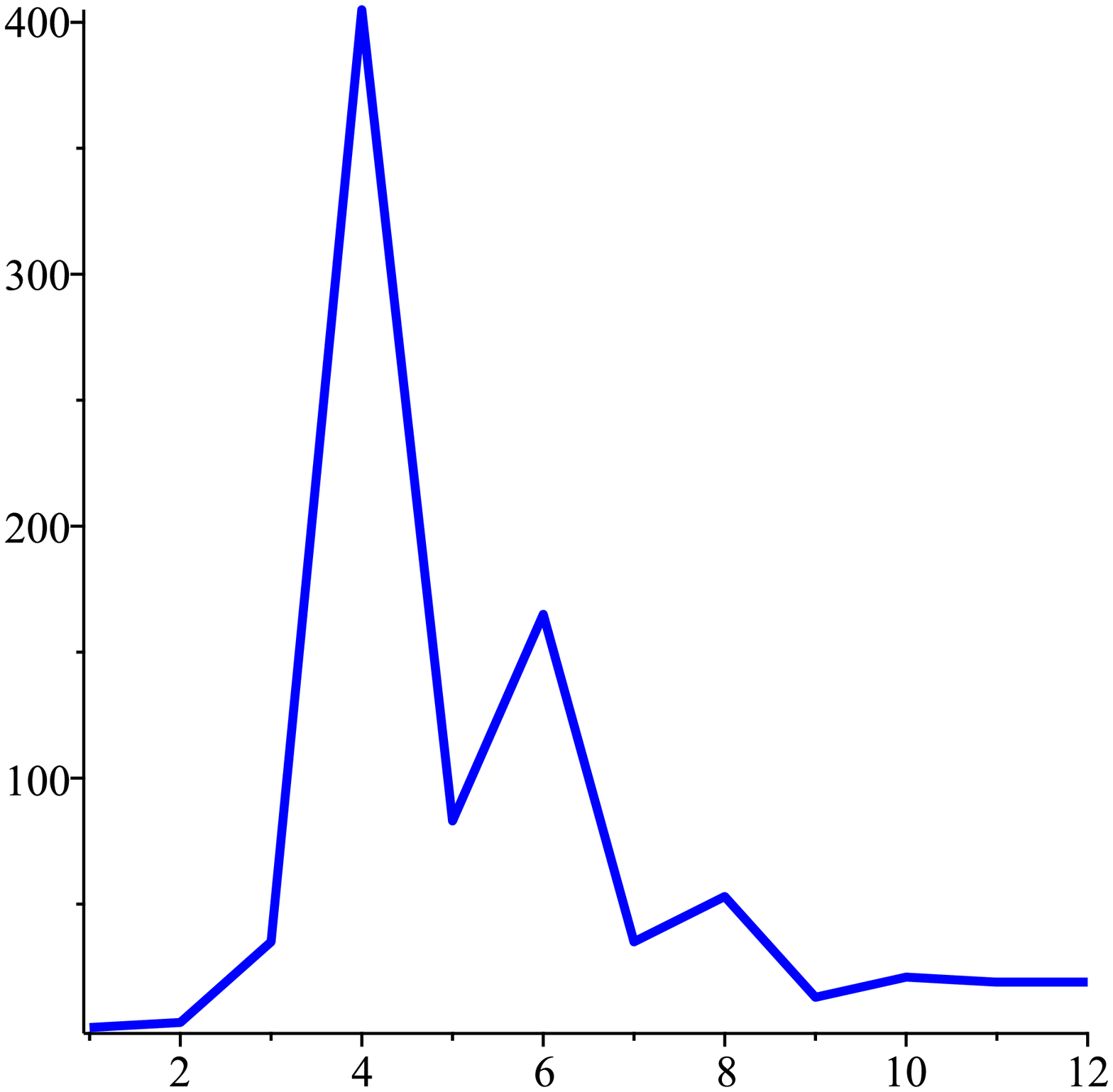,height=2in,width=3in} }$$
Since every solution of  (\ref{neq1}) or (\ref{neq2}) should also satisfy the above congruencies, it is clear that in the end we get all the solutions of  (\ref{neq1}) or (\ref{neq2}).

We are going to exemplify our algorithm in the case $n=19$ which is again a very fortunate case.
The equation (\ref{neq2}) becomes

\begin{equation}\label{eq19}
\begin{array}{c}
x_0+19x_1+171x_2+969x_3+3876x_4+11628x_5+27132x_6+\\ \\
50388x_7+75582x_8+92378x_9=0, \ \ \ \ x_i\in \{-1,0,1\}.
\end{array}
\end{equation}

\n The list  $d=[d_1,d_2,...]$ is $[19, 323, 646, 8398, 92378, 524288=2^{19}]$. So, the first step we get (\ref{eq19}) modulo 19:
$$ x_0=0 \ \ (mod\  19),$$
\n which has clearly only the trivial solution $x_0=0$.  Substituting this solution in  (\ref{eq19}) and taking everything modulo $323=(19)(17)$, we obtain

$$19x_1+171x_2\equiv 0 \ \ (mod \ 323) \Leftrightarrow  x_1+9x_2\equiv 0 \ \ (mod\  17).$$
Obviously, this last congruency has only the trivial solution again, i.e., $x_1=x_2=0$.
We substitute into (\ref{eq19}) and taking everything modulo $646=(19)(17)(2)$, gives
$$323x_3\equiv 0 \ \  (mod\ 646) \Leftrightarrow x_3\equiv 0\ \  (mod\ \  2).$$
This forces $x_3=0$ and then we move to the next step and use the modulo  $8398=(2)(13)(17)(19)$:
$$3876x_4+3230x_5+1938x_6 \equiv 0 \ \ (mod \ 8398) \Leftrightarrow 6x_4+5x_5+3x_6 \equiv 0 \ \ (mod \ 13).$$
Again, we do not have any nontrivial solutions and so we move on to modulo $92378=(2)(11)(13)(17)(19)$:
$$50388x_7+75582x_8\equiv 0 \ \ (mod\ 92378)\ \Leftrightarrow 6x_7+9x_8\equiv 0 \ \ (mod\ 11).$$
Since we do get anything non-trivial we conclude that $x_9$ must be zero also, and so we have only the trivial solution in this case.
We observe that $s_i=1$ for all $i$. This is what makes this situation so special.

The program can be found at http://ejionascu.ro/notes/program.pdf. To estimate the complexity of this algorithm one needs to have a good control on the sequence 
$\{D_i\}$ which is described with certain precision in \cite{jorisOS}. It is clear that if one can uniformly bound the number of new variables at each step, then the complexity becomes $O(n)$. It is surprising that the time required to run the program is not linear in terms of $n$. We found that
it has a big oscillating behavior.   For instance, we could run it for $n=194$ in a few minutes but it takes hours for $n=155$.

\end{document}